\documentclass[12pt,a4paper]{article}

\usepackage{amscd, amsmath, amssymb, amsthm, amscd, latexsym}
\setcounter{tocdepth}{1}


\newtheorem{theorem}{Theorem}[section]
\theoremstyle{plain}

\newtheorem{proposition}[theorem]{Proposition}

\newtheorem{lemma}[theorem]{Lemma}

\newtheorem*{main}{Theorem}

\theoremstyle{definition}

\newtheorem{definition}[theorem]{Definition}
\newtheorem{remark}[theorem]{Remark}


\renewcommand{\a}{\alpha}
\newcommand{\ab}{\mathrm{ab}}
\newcommand{\Aut}{\mathrm{Aut}\,}
\renewcommand{\b}{\beta}

\newcommand{\bF}{\mathbb{F}}
\newcommand{\bFp}{\bF_p}
\newcommand{\bk}{\mathbf{k}}
\newcommand{\bS}{\mathbf{S}}

\newcommand{\bQ}{\mathbb{Q}}

\newcommand{\bZ}{\mathbb{Z}}

\newcommand{\Cf}{\textrm{cf.}\;}

\newcommand{\cO}{\mathcal{O}}

\newcommand{\ebar}{\bar{\eta}}

\newcommand{\fg}{\pi_1}

\newcommand{\fgX}{\fg(X)}

\newcommand{\fgXmg}{\fg(X,\m)^{\geo}}

\newcommand{\fgXm}{\fg(X,\m)}

\newcommand{\fgXmab}{\fg(X,\m)^{\ab}}

\newcommand{\Gal}{\operatorname{Gal}}
\newcommand{\geo}{\mathrm{geo}}

\newcommand{\GL}{\mathrm{GL}}

\newcommand{\Ker}{\operatorname{Ker}}
\newcommand{\kC}{k(C)}

\newcommand{\m}{\mathfrak{m}}

\newcommand{\PC}{P(C)}
\newcommand{\PtC}{P(\tC)}
\newcommand{\Hom}{\operatorname{Hom}}

\newcommand{\OF}{\cO_F}

\newcommand{\ol}[1]{\overline{#1}}

\newcommand{\opcit}{\textit{op.\,cit.}}

\newcommand{\ra}{\rightarrow}
\newcommand{\red}{\mathrm{red}}
\newcommand{\reflem}[1]{Lemma~{\rm \ref{#1}}}
\newcommand{\refprop}[1]{Proposition~{\rm \ref{#1}}}
\newcommand{\refthm}[1]{Theorem~{\rm \ref{#1}}}

\newcommand{\refsec}[1]{Section~{\rm \ref{#1}}}

\newcommand{\RII}{I\hspace{-.1em}\,I}

\newcommand{\SchZ}{\operatorname{Sch}(\bZ)}

\newcommand{\Spec}{\operatorname{Spec}}
\newcommand{\ssm}{\smallsetminus}
\newcommand{\tame}{\mathrm{tame}}
\newcommand{\tC}{\wt{C}}

\newcommand{\vphi}{\varphi}
\newcommand{\vphii}{\varphi^{-1}}

\newcommand{\wt}[1]{\widetilde{#1}}

\newcommand{\xra}{\xrightarrow}

\def\sn{\smallskip\noindent}

\title{Smallness of fundamental groups for arithmetic schemes}

\author{Shinya Harada and Toshiro Hiranouchi\footnote{
 The first author is 
 supported by the JSPS Fellowships for Young Scientists. 
 The second author is partially supported by GCOE, Kyoto University.
}}

\begin{document}

\pagenumbering{arabic}
\maketitle

\begin{abstract}
  The smallness is proved of
  \'etale fundamental groups for arithmetic schemes. 
  This is a higher dimensional analogue of
  the Hermite-Minkowski theorem.
  We also refer to the case of varieties
  over finite fields.
  As an application,
  we prove certain finiteness results
  of representations
  of the fundamental groups over
  algebraically closed fields.
\end{abstract}

%
%

\section{Introduction}
\label{Introduction}

The Hermite-Minkowski theorem 
is a remarkable result 
in algebraic number theory. 
It says that for a number field $F$  and 
a finite set $S$  of primes of $F$, 
there exist only finitely many 
extensions of $F$ 
unramified outside $S$ 
with given degree. 
By Galois theory 
we can interpret the theorem as 
the {\it smallness} of the Galois group $G_{F,S}$ 
of the maximal Galois extension of $F$ 
unramified outside $S$. 
In general, 
a profinite group is said to be {\it small}\/ 
if there exist only finitely many 
open subgroups of the group of given index. 
Our main results are the smallness of 
\'etale fundamental groups for some arithmetic schemes 
as follows.

\begin{main}[Th.\ \ref{thm:small}]
\label{arith-small}
  Let\/ $X$ be a connected scheme
  of finite type and dominant\footnote{In this paper we say that
 a morphism $f:X \to Y$ is dominant if
 $f(X')$ is dense in $Y$ for any irreducible
 component $X'$ of $X$.} 
  over the ring of integers $\bZ$.
  Then the \'{e}tale fundamental group
  $\fgX$ is small. 
  Equivalently, 
  there exist only finitely many 
  \'etale coverings of $X$ with given degree. 
\end{main}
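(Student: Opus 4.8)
The plan is to argue by induction on the dimension $d=\dim X$, taking the Hermite--Minkowski theorem as the base case and peeling off one dimension at a time by fibering $X$ into curves over an arithmetic base of one lower dimension. Throughout I use freely that smallness descends along dominant open immersions: if $U\subseteq X$ is dense open, then $\pi_1(U)\to\pi_1(X)$ is surjective, so $H\mapsto$ (its preimage) sends the index-$n$ open subgroups of $\pi_1(X)$ injectively into those of $\pi_1(U)$, whence smallness of $\pi_1(U)$ forces that of $\pi_1(X)$. Thus I may shrink $X$ at will, and in particular assume $X$ regular, connected and flat of finite type over $\bZ$.

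For $d=1$ the scheme $X$ is, after shrinking, an open subscheme of $\Spec\OF$ for a number field $F$, and $\pi_1(X)$ is the Galois group $G_{F,S}$ of the maximal extension unramified outside a finite set $S$; its smallness is exactly the group-theoretic formulation of Hermite--Minkowski. For the inductive step I would, after shrinking, produce a smooth morphism $f\colon X\to Y$ with one-dimensional geometrically connected fibres, where $Y$ is again connected, regular and dominant over $\bZ$ with $\dim Y=d-1$; concretely one writes $k(X)$ as a finitely generated extension of transcendence degree one of $k(Y)$, chooses a model, and absorbs the algebraic closure of $k(Y)$ in $k(X)$ to force geometric connectedness of the fibres. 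By the inductive hypothesis $\pi_1(Y)$ is small.

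The engine of the step is the homotopy exact sequence. Choosing a relative smooth compactification $\bar f\colon\bar X\to Y$ with boundary a divisor $D$ finite over $Y$, the proper smooth case of the fundamental exact sequence gives
\[
\pi_1(\bar X_{\bar\eta})\longrightarrow\pi_1(\bar X)\longrightarrow\pi_1(Y)\longrightarrow 1,
\]
exact at $\pi_1(\bar X)$, where $\bar\eta=\Spec\overline{k(Y)}$ is the geometric generic point. Since $Y$ is dominant over $\bZ$, the field $k(Y)$ has characteristic zero, so $\bar X_{\bar\eta}$ is a smooth proper curve over an algebraically closed field of characteristic zero and $\pi_1(\bar X_{\bar\eta})$ is topologically finitely generated (the profinite completion of a surface group, via Riemann existence). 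Hence $\Ker(\pi_1(\bar X)\to\pi_1(Y))$, being the image of $\pi_1(\bar X_{\bar\eta})$, is topologically finitely generated, and I would invoke a purely group-theoretic lemma: an extension $1\to N\to G\to Q\to1$ of a small group $Q$ by a topologically finitely generated normal subgroup $N$ is small. (For finite $F$ one factors every $\mathrm{Hom}(G,F)$ through $G/M$, where $M\trianglelefteq G$ is the intersection of all, finitely many, index-$\le|F|$ open subgroups of $N$; as $N/M$ is finite, $G/M$ is finite-by-small, hence small.) This yields smallness of $\pi_1(\bar X)$.

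The remaining, and main, difficulty is to pass from the proper $\bar X$ to the open $X=\bar X\setminus D$. Here $\pi_1(X)\twoheadrightarrow\pi_1(\bar X)$ runs the wrong way, the extra coverings being exactly those of $\bar X$ ramified along $D$. Over the characteristic-zero geometric generic fibre this is harmless, the open curve $X_{\bar\eta}$ still having topologically finitely generated fundamental group, so the same homotopy sequence and group lemma would apply once one knows that $\pi_1(X_{\bar\eta})\to\pi_1(X)\to\pi_1(Y)\to1$ is exact at the middle. The genuine obstacle is that for the non-proper $f$ this middle exactness can fail because of wild ramification along $D$ at the finite residue characteristics: a covering trivial on the geometric generic fibre need not be pulled back from $Y$. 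I therefore expect the crux to be a higher-dimensional ramification bound that controls, for coverings of $X$ of bounded degree, the wild ramification along the arithmetic divisor $D$ (itself of dimension $d-1$), thereby reducing the open case to the proper case together with the inductive smallness of $\pi_1(D)$. This is precisely the point at which the Hermite--Minkowski mechanism (finiteness of extensions of bounded degree with bounded ramification) must be generalized, and it is the step I expect to require the most work.
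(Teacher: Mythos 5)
Your proposal stops short of a proof exactly where you say it does: the last paragraph replaces the key step by the hope for ``a higher-dimensional ramification bound'' along $D$, which you do not supply. That is the gap. But the obstacle you are trying to circumvent is not actually there. The homotopy exact sequence you need is Katz--Lang's Lemma~2 (the paper's \reflem{homotopy exact sequence}): for $S$ connected, normal and locally Noetherian and $f\colon X\to S$ \emph{smooth and surjective} with connected geometric generic fibre, the sequence $\pi_1(X_{\bar\eta})\to\pi_1(X)\to\pi_1(S)\to 1$ is exact, with no properness hypothesis. So you should apply it directly to the open fibration $X\to Y$ rather than to a compactification: middle exactness does not fail for non-proper smooth surjective morphisms over a normal base, wild ramification along $D$ notwithstanding (a connected cover of $X$ split over $X_{\bar\eta}$ is recovered as $X\times_Y Y'$ with $Y'$ the normalization of $Y$ in the constant field of the cover; smoothness of $f$ and normality of $Y$ make this work). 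The geometric generic fibre of $X\to Y$ is an open curve in characteristic $0$, so its fundamental group is still topologically finitely generated, and your extension lemma then finishes the inductive step. With that correction your induction goes through; both the detour through $\bar X$ and the worry about $D$ are unnecessary.

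For comparison, the paper's route avoids the induction on dimension altogether: after reducing to $X$ normal and smooth over an open $S\subset\Spec(\OF)$, where $F$ is the algebraic closure of $\bQ$ in $k(X)$ (which forces $X_{\ol{F}}$ to be connected), it applies Katz--Lang once to $X\to S$. The geometric generic fibre is then a finite-type scheme over an algebraically closed field of characteristic $0$ of \emph{arbitrary} dimension, and its fundamental group is topologically finitely generated by SGA~7~I, Exp.~II, Th.~2.3.1; all the arithmetic is concentrated in the one-dimensional base, where Hermite--Minkowski applies. Two smaller points. First, your opening reduction (``shrink and assume $X$ regular, connected, flat'') silently skips the case where $X$ is reducible, non-reduced or non-normal; for a disconnected or too-small open $U$ the map $\pi_1(U)\to\pi_1(X)$ need not be surjective, and the paper handles this with descent along $\sqcup_i X_i\to X$ and along the normalization, using that a free product of small groups is small. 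Second, your parenthetical ``finite-by-small, hence small'' is precisely the nontrivial content of the paper's \refclaim{claim-M}: one must show there are only finitely many open normal $N$ of bounded index meeting the finite kernel trivially, which requires bounding the sections $N''\to\varphi^{-1}(N'')$ by a conjugation and semidirect-product argument over a small group. That step deserves more than an aside.
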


On the contrary, 
non-complete varieties 
over a finite field, 
even function fields over a finite field 
in one variable with characteristic $p>0$ 
have so many finite extensions 
that the Hermite-Minkowski theorem no longer holds. 
In fact, 
the Artin-Schreier equations 
produce infinitely many extensions
of degree $p$ which ramify only at a place. 
Thus one can hope for the finiteness only for 
extensions of bounded degree 
with extra conditions, 
for example, 
fixing discriminants 
(see \cite{Goss}, Sect.~8.23). 
In \refsec{sec:modulus}, 
we use the fundamental groups with restricted ramification 
instead of discriminants. 
More precisely, 
we introduce a certain quotient  $\fgXm$  of 
the \'etale fundamental group  $\fgX$. 
It classifies \'etale coverings of  $X$  
which allow ramification along the {\it boundary} 
according to a given {\it modulus $\m$}. 
Here, the modulus  $\m$  is 
a collection of moduli 
associated with curves on  $X$. 
These coverings have the advantage that
they are stable under base change,
while coverings with restricted ramification
dealt in \cite{Hiranouchi} are not. 
We examine 
the smallness of the fundamental group with restricted ramification 
for varieties over a finite field as follows.


\begin{main}[Th.\ \ref{thm:G-small}]
 Let\/  $X$  be a connected variety over a finite field $k$.
  Assume that there exist an \'etale morphism 
  $X'\to X$ and a proper morphism 
  $X' \to Z$, where $Z$ is a curve over $k$.
  Then $\fgXm$  is small for any modulus  $\m$  on  $X$.  
\end{main}

As an application, 
in Section~\ref{sec:application}
we prove 
certain finiteness results of representations 
of the fundamental groups with restricted ramification over
an algebraically closed field. 

After writing up this paper, 
it was pointed out by Professor Y. Taguchi that 
one of our main theorems (Th.~\ref{thm:small})  
has been proved by G.~Faltings  
(\cite{Rational_Pts}, Chap.~VI, Sect.~$2$) 
assuming the scheme $X$ 
is affine and smooth over $\bZ$. 
However there is only a sketch of the proof, 
and our proof is more systematic 
and simpler than his 
by virtue of \reflem{extension-closed}. 
Thus we think that 
it is indispensable 
to leave Theorem~\ref{thm:small} 
 in this paper. 

Throughout this paper, 
a {\it number field}\/ 
is a finite extension field of the rational number field $\bQ$. 
We denote by  $\OF$  
the ring of integers of a number field $F$. 
For any field $K$,  we denote 
by $\ol{K}$ a separable closure of $K$
 and by $G_K$ the absolute Galois group $\Gal(\ol{K}/K)$
 of $K$. 

\medskip\noindent
{\it Acknowledgments.} 
  The authors are deeply indebted to
  Professor Yuichiro Taguchi
  for giving us the opportunity
  to try to the research problem in this paper
  and for giving many suggestions
  for the proof of \refthm{thm:small}.
  The first author thanks Professor Akio Tamagawa
  for indicating him 
  the reference
  Expos\'{e} \RII \ of \cite{SGA7-I}.
  He is also grateful to Professor John.\ S.\ Wilson
  for helpful comments on his question
  about \reflem{extension-closed}
  and for telling him the article \cite{Wilson}.
  He would also like to express
  his sincere gratitude to
  Professor Makoto Matsumoto
  for telling him another proof of
  \reflem{extension-closed} and
  for permitting him to include the proof
  in this paper. 
  The second author would like to thank Shuji Saito 
  for informing him of G.~Wiesend's recent works 
  \cite{Wiesend:CFT}, \cite{Wiesend:tame} and \cite{Wiesend:const}. 
  He also thanks Alexander Schmidt for his helpful comments 
  on the first version of this paper  
  and suggestion for improvement of Theorem~\ref{thm:small},
 and Moritz Kerz for answering his questions
 on the paper \cite{KS}.
 The authors thank the referee
 for many valuable suggestions 
 to improve \refsec{sec:modulus} 
 and the proof of \refthm{Finiteness of Reps ch=p} (ii).

  
\section{Smallness of fundamental groups}
\label{sec:Group Theory}
The aim of this section is to prove 
the smallness of the \'{e}tale fundamental group 
for a flat scheme over $\Spec(\bZ)$ (Th.~\ref{thm:small}). 
First, we 
interpret the Hermite-Minkowski theorem 
as the {\it smallness}\/ of a Galois group as follows.  

\begin{definition}[\Cf \cite{Field-arithmetic}, Sect.\ $16.10$,
    or of type (F) in Sect.\ 4.1 of \cite{Serre-Galcoh}]
  A profinite group $G$ is said to be {\it small}\/
  if there exist only finitely many open normal subgroups $H$
  with $(G:H) \le n$ for any positive integer $n$.
\end{definition}

For a number field $F$, 
the Galois group $G_{F,S}$ of the maximal Galois extension
of  $F$  unramified outside $S$  is small, 
where  $S$  is a finite set of primes  of  $F$.  
This fact is nothing other than
the Hermite-Minkowski theorem.

\begin{remark}
  The notion of a small profinite group
  is used in another meaning in \cite{Newelski}.
  These two notions are quite different.
  For example, 
  a direct product of countably many $\bZ/n \bZ$
  is small in the sense of \opcit,
  and it is not small in our meaning.
  In general,
  any countably-based profinite group
  is small in the sense of \opcit
\end{remark}

\begin{proposition}[\Cf \cite{Serre-Galcoh}, Chap.~III, Sect.~4.1, Prop.~8]
Let $G$  be a profinite group. 
The following conditions are equivalent:

\sn
$\mathrm{(i)}$ $G$  is small.

\sn
$\mathrm{(i')}$ There exist only finitely many
 open
 subgroups $H$
 with $(G:H) \le n$ for any positive integer $n$.

\sn
$\mathrm{(ii)}$ For every finite $G$-group $A$, 
$H^1(G,A)$  is a finite set. 

\sn
$\mathrm{(ii')}$ For every finite group $A$, 
 the set of continuous homomorphisms
 $\Hom(G,A)$ is a finite set. 
\end{proposition}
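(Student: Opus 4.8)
\medskip\noindent
\textit{Proof proposal.}
The implication (i) $\iff$ (i') is immediate, as (i') is nothing but the definition of smallness restated. The plan is to prove the two biconditionals (i') $\iff$ (ii') and (ii') $\iff$ (ii). At the outset I record one fact used repeatedly: condition (i') already forces $G$ to have only finitely many open subgroups (not merely normal ones) of each bounded index. Indeed, an open subgroup $K$ of index $\le n$ contains its core $\bigcap_{g\in G} gKg^{-1}$, an open normal subgroup of index $\le n!$; by (i') there are finitely many such cores, and each is contained in only finitely many subgroups of $G$ since the corresponding quotient is finite.

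For (i') $\iff$ (ii') I would count homomorphisms against their kernels. For (ii') $\Rightarrow$ (i'), an open normal subgroup $H \trianglelefteq G$ with $(G:H) = m \le n$ is the kernel of the quotient $G \twoheadrightarrow G/H$ onto a group of order $m$; since there are finitely many isomorphism types of finite groups of order $\le n$ and, by (ii'), finitely many homomorphisms from $G$ to each, only finitely many subgroups $H$ can occur. Conversely, for (i') $\Rightarrow$ (ii'), fix a finite group $A$: each $\varphi \in \Hom(G,A)$ has open kernel of index $(G:\Ker\varphi) = |\operatorname{im}\varphi| \le |A|$ and factors as $G \twoheadrightarrow G/\Ker\varphi \hookrightarrow A$. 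By (i') there are finitely many possible kernels, and for each the injections $G/\Ker\varphi \hookrightarrow A$ are finite in number, so $\Hom(G,A)$ is finite.

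The direction (ii) $\Rightarrow$ (ii') is easy: apply (ii) to a finite group $A$ with the trivial $G$-action, for which $H^1(G,A) = \Hom(G,A)/A$ under conjugation, so each fibre has at most $|A|$ elements and finiteness of $H^1(G,A)$ forces finiteness of $\Hom(G,A)$. The substantial direction is (ii') $\Rightarrow$ (ii). Given an arbitrary finite $G$-group $A$, with continuous action $\rho\colon G \to \Aut(A)$, I would form the profinite semidirect product $\Gamma := A \rtimes G$ and use the identification of $Z^1(G,A)$ with the set of continuous sections of the projection $\pi\colon \Gamma \to G$, sending a cocycle $a$ to $g \mapsto (a_g,g)$. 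Each section is the inverse of an isomorphism from a complement of $A$, i.e.\ from an open subgroup of $\Gamma$ of index $|A|$, and distinct sections give distinct complements. Hence $\# Z^1(G,A)$ is bounded by the number of open subgroups of $\Gamma$ of index $|A|$, and it suffices to show that $\Gamma$ is small, whereupon $H^1(G,A)$, a quotient of $Z^1(G,A)$, is finite.

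That $\Gamma$ is small is the crux, and I would verify it through (ii') for $\Gamma$: for a finite group $B$, any $\psi \in \Hom(\Gamma,B)$ is determined by its restriction $\psi|_A \in \Hom(A,B)$ and by $\psi \circ s_0 \in \Hom(G,B)$, where $s_0\colon g \mapsto (1,g)$ is the canonical splitting, since every element of $\Gamma$ is a product of one from $A$ and one from $s_0(G)$. The first factor ranges over the finite set $\Hom(A,B)$ and the second over the finite set $\Hom(G,B)$ (finite by (ii') for $G$), so $\Hom(\Gamma,B)$ is finite; thus $\Gamma$ satisfies (ii') and, by the biconditional just proved, is small. The main obstacle is precisely this passage to $\Gamma$: one must treat a possibly non-abelian $A$ carrying a nontrivial twisted action, check that continuity of $\rho$ makes $\Gamma$ profinite, and set up the cocycle--section--complement dictionary carefully. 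Once that is in place the counting is routine, and combining the two biconditionals proves the proposition.
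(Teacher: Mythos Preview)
The paper does not supply its own proof of this proposition; it simply quotes the statement with a citation to Serre's \emph{Cohomologie galoisienne}. So there is nothing in the paper to compare against beyond the reference.

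Your argument is correct and is essentially the standard one. The equivalence (i)$\iff$(i') is trivial as written (the paper's definition of \emph{small} literally is (i')); the core trick you record up front, passing from open subgroups to their normal cores, is exactly what makes the later counting of complements work. The chain (i')$\iff$(ii') via kernels and the implication (ii)$\Rightarrow$(ii') via trivial action are routine and correctly handled. For (ii')$\Rightarrow$(ii), your reduction to smallness of $\Gamma = A\rtimes G$ by the injection $\Hom(\Gamma,B)\hookrightarrow \Hom(A,B)\times\Hom(G,B)$ is clean; the one point worth making explicit in a final write-up is why a continuous section $s$ has closed (hence open, of index $|A|$) image---this follows because $G$ is compact and $\Gamma$ Hausdorff---and that $\Gamma$ is genuinely profinite (the action factors through a finite quotient of $G$, so $\Gamma$ is an inverse limit of finite groups). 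With those details filled in, the cocycle--section--complement dictionary and the final count go through as you describe.
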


Secondly, we list basic properties of
small profinite groups.  

\begin{proposition}[\Cf \cite{Field-arithmetic}, Sect.\ 16.10]
\label{prop:small}
 Let $G$ be a profinite group.

\sn
$(\mathrm{i})$
 If $N$ is an open subgroup of $G$,
 then $G$ is small if and only if $N$ is small.

\sn
$(\mathrm{ii})$
 If $G$ is small and $N$ is a closed normal subgroup of $G$,
 then the quotient group $G/N$ is small.

\sn
$(\mathrm{iii})$
 If $G$ is topologically finitely generated,
 then $G$ is small.
\end{proposition}

\begin{lemma}
 \label{lem:small*small}
 If $G$ and $G'$ are small profinite groups,
 then their free product
 $G \ast G'$ is also small.
\end{lemma}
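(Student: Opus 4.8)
The plan is to reduce the assertion to the functorial characterization of smallness rather than to argue directly with open subgroups. By the equivalence of conditions $\mathrm{(i)}$ and $\mathrm{(ii')}$ in the Proposition above, a profinite group is small if and only if $\Hom(-,A)$ is finite for every finite group $A$. So it suffices to show that for each finite group $A$ the set of continuous homomorphisms $\Hom(G \ast G', A)$ is finite.

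Here $G \ast G'$ is understood as the free product in the category of profinite groups, that is, the coproduct. Its defining universal property states that, for every profinite group $H$, the two canonical maps $G \to G \ast G'$ and $G' \to G \ast G'$ induce a bijection
$$
\Hom(G \ast G', H) \;\xrightarrow{\ \sim\ }\; \Hom(G, H) \times \Hom(G', H).
$$
Taking $H = A$ a finite group, and using that $G$ and $G'$ are small so that $\Hom(G, A)$ and $\Hom(G', A)$ are both finite by criterion $\mathrm{(ii')}$, we conclude that $\Hom(G \ast G', A)$ is in bijection with a product of two finite sets, hence finite. Since $A$ was arbitrary, criterion $\mathrm{(ii')}$ shows that $G \ast G'$ is small.

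The one point that requires care is the first step, namely invoking the universal property in the precise form of the displayed Hom-bijection: this is exactly what characterizes the coproduct (the free profinite product) among the various possible notions of free product, and it is what lets us handle every finite target $A$ at once. I do not expect a serious obstacle beyond pinning down this universal property. By contrast, a direct attack through criterion $\mathrm{(i)}$, counting the open normal subgroups of $G \ast G'$ of index at most $n$, would be considerably more awkward, since such subgroups need not decompose cleanly in terms of subgroups of the factors $G$ and $G'$; routing the proof through the functorial criterion $\mathrm{(ii')}$ is precisely what sidesteps that difficulty.
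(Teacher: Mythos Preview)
Your proof is correct and is essentially the same as the paper's: both use the bijection $\Hom(G\ast G',A)\cong \Hom(G,A)\times\Hom(G',A)$ for finite $A$ (which you phrase via the coproduct universal property, and the paper writes down explicitly via $f\mapsto(f|_G,f|_{G'})$) together with the criterion~$\mathrm{(ii')}$ for smallness.
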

\begin{proof}
 For any finite group $A$,
 there is a bijection
$$
 \Hom( G \ast G' , A) \ra \Hom(G,A) \times \Hom(G',A)
$$
 which maps $f : G \ast G' \ra A$
 to the pair $(f|_G, f|_{G'})$, 
 where $f|_G$ and $f|_{G'}$ are 
 the restrictions of $f$ to $G$ and $G'$ respectively.
 Hence we have the assertion.  
\end{proof}

 The following lemma, \reflem{extension-closed}
 is a profinite group version of  
 Lemma 5 in \cite{Wilson}. 
We can prove it 
by the same method, 
namely by employing the theory of ``variety of groups''. 
Here we prove this
by purely profinite group theoretic argument.
This proof is due to Professor M.\ Matsumoto.
 First we prove a lemma,
 which is crucial for the proof of
 \reflem{extension-closed}.

\begin{lemma}
\label{claim-M}
 Let $1 \ra G' \ra G \xra{\vphi} G'' \ra 1$
 be an exact sequence of profinite groups,
 where $G'$ is a finite group and
 $G''$ is a small profinite group.
 For any positive integer $n$
 there exist
 only finitely many
 open normal subgroups $N$ of $G$
  with $(G : N) \le n$ and $G' \cap N = 1$.
\end{lemma}
\begin{proof}
 For any positive integer $n$, 
 let  $\bS_n$  be the set of 
 open normal subgroups $N$ of $G$  
 (we write it as $N\lhd G$)  
 with $(G : N) \le n$.  
 For  $N \in \bS_n$  with $N \cap G' = 1$, 
 the restriction $\vphi|_{N} : N \ra \vphi(N)$ of  $\vphi$ is an isomorphism
 and we have $(G'' : \vphi(N)) \le n$.
 Let $N''$ be an open normal subgroup of $G''$
 with $(G'' : N'') \le n$.
 By the smallness of $G''$,
 it is sufficient 
 to prove the finiteness of the set $\bS'_n$  
 of  $N \in \bS_n$ with  $N \cap G' = 1$ and $\vphi(N) = N''$.
 For any $N \in \bS'_n$,
 by composing the isomorphism $\vphii : N'' \ra N$
 and the inclusion $N \ra \vphii(N'')$,
 we have a section $N'' \ra \vphii(N'')$
 to $\vphii(N'') \xra{\vphi} N''$.
 This correspondence induces an injection $\bS'_n \ra \bS''_n$,
 where $\bS''_n$ is the set of sections $N'' \ra \vphii(N'')$
 to $\vphii(N'') \xra{\vphi} N''$.
 Now we prove that the set $\bS''_n$ is finite.
 Suppose that $\bS''_n \neq \emptyset$.
 Take a section $s_0 : N'' \ra \vphii(N'')$ in $\bS''_n$.
 Then we know that
 any section $s : N'' \ra \vphii(N'')$ in $\bS''_n$
 factors through the semi-direct product
 $G'\rtimes s_0(N'')$ which is a subgroup of $\vphii(N'')$.
 Let $K$ be the kernel of the conjugate action
 $N'' \ra s_0(N'') \ra \Aut(G')$,
 which is an open normal subgroup of $N''$.
 Then the restriction to $K$ of any section
 $s : N'' \ra G' \rtimes s_0(N'')$ in $\bS''_n$
 factors through $G' \times s_0(K)$.
 This is completely determined by $s_0$
 and the projection of $s|_{K}$ to $G'$.
 Since $K$ is small,
 there are only finitely many possibilities
 of the restriction of sections
 $s : N'' \ra G' \rtimes s_0(N'')$ to $K$.
 Since $K$ is open in $N''$,
 we conclude that $\bS''_n$ is a finite set.
\end{proof}

\begin{lemma}
\label{extension-closed}
 Let\/
 $ G' \ra G \ra G'' \ra 1$
 be an exact sequence of profinite groups.
 If $G'$ and $G''$ are small, then so is $G$.
\end{lemma}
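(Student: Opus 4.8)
The plan is to reduce everything to the finiteness of $\Hom$-sets. By the Proposition of Serre quoted above, $G$ is small if and only if $\Hom(G,A)$ is finite for every finite group $A$, so I fix such an $A$ and analyze the restriction map $\Hom(G,A) \ra \Hom(G',A)$, $f \mapsto f|_{G'}$. Since $G'$ is small, the target $\Hom(G',A)$ is already finite; hence it suffices to fix a homomorphism $\phi \colon G' \ra A$ in the image and to bound the number of its extensions $f \colon G \ra A$ with $f|_{G'} = \phi$.

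To count extensions I would pass to cocycles. If $\phi$ has no extension the fibre is empty; otherwise fix one extension $f_1$, and for any other extension $f_2$ set $d(g) = f_1(g)^{-1} f_2(g)$. Then $d|_{G'}$ is trivial, and a direct computation gives the continuous $1$-cocycle identity
\[
 d(gh) = {}^{f_1(h)^{-1}}\!d(g)\cdot d(h)
\]
for the action of $G$ on the finite group $A$ by $h\cdot a = f_1(h)^{-1} a\, f_1(h)$. As $f_2 \mapsto d$ is injective, it is enough to show that there are only finitely many such cocycles that are trivial on $G'$.

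This last point is the \emph{main obstacle}: $A$ need not be abelian and the action need not factor through $G''$, so I cannot simply invoke an inflation map, and I certainly may not assume the conclusion (that $G$ is small) to bound $H^1$. The device I would use is the kernel $K$ of the action $G \ra \Aut A$, which is open because the action is continuous with finite target. On $K$ the action is trivial, so $d|_K \colon K \ra A$ is an honest homomorphism; since $d$ kills $G'$, this homomorphism kills $K \cap G'$ and therefore factors through $K/(K\cap G') \cong KG'/G'$. The crucial observation is that $KG'/G'$ is an \emph{open} subgroup of $G''$, hence small by Proposition~\ref{lem:small}(i), so $\Hom(KG'/G',A)$ is finite and there are only finitely many possibilities for $d|_K$.

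Finally I would upgrade finiteness on $K$ to finiteness on $G$. Because $K$ is open, $G$ is a finite union of cosets $g_iK$, and the cocycle relation together with the triviality of the $K$-action yields $d(g_i k) = d(g_i)\,d(k)$ for $k \in K$. Thus $d$ is completely determined by $d|_K$ and by the finitely many values $d(g_i) \in A$, so only finitely many cocycles occur. Consequently each fibre of the restriction map is finite, $\Hom(G,A)$ is finite for every finite $A$, and $G$ is small.
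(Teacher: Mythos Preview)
Your argument is correct. The cocycle identity you derive, the openness and normality of $K=f_1^{-1}(Z(A))$, the factorization of $d|_K$ through the open subgroup $KG'/G'$ of $G''$, and the final reconstruction $d(g_ik)=d(g_i)\,d(k)$ all check out, so each fibre of the restriction map is finite and $\Hom(G,A)$ is finite.

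Your route differs from the paper's. The paper works directly with the set of open normal subgroups $N\lhd G$ of index at most $n$: using the smallness of $G'$ it first reduces to the case where $G'$ is \emph{finite}, then chooses an open normal $N_0\lhd G$ with $G'\cap N_0=1$ and, via a separate claim, counts the possible $N\cap N_0$ by analysing sections of $\vphi^{-1}(N'')\to N''$. The core trick in that claim is the same as yours---restrict to the kernel of the conjugation action $N''\to\Aut(G')$, where sections become homomorphisms into a product and are controlled by the smallness of an open subgroup of $G''$. What your presentation buys is economy: by using the $\Hom$-characterisation from the outset you avoid the intermediate reduction to finite $G'$ and fold the section-counting claim into a single cocycle computation. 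The paper's version, on the other hand, stays closer to the definition of smallness via subgroups and makes the role of the reduction $G'\ \text{finite}$ explicit, which some readers may find conceptually cleaner.
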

\begin{proof}
 For any positive integer $n$,
 let  $\bS_n$  be the set
 as in Lemma\ \ref{claim-M}.
 For each  $N \in \bS_n$, we have
 $G' \cap N \lhd G'$ and $(G' : G' \cap N) \le n$.
 Note that $G' \cap N$  is also normal in  $G$.
 By taking quotient groups,
 we have an exact sequence
 $ G' / G' \cap N \ra G / G' \cap N \ra G'' \ra 1$.
 By the smallness of $G'$,
 there are only finitely many possibilities
 of open normal subgroups of $G'$
 with the form $G' \cap N$
 when $N$ runs through all the open normal subgroups of $G$
 with $(G : N) \le n$.
 Let $N'_1, \ldots, N'_r$ be
 such open normal subgroups of $G'$.
 Clearly we have
 \begin{align*}
   \bS_n
     &= \bigcup_{i=1}^r \left\{ N \lhd G \mid (G : N) \le n, \; N'_i \subset N \right\} \\
     &\simeq \bigcup_{i=1}^r \left\{ \bar{N} \lhd G/N'_i \mid (G/N'_i : \bar{N}) \le n \right\}. 
 \end{align*}
 Hence it is sufficient to replace $G$
 by $G/N'_i$ and assume $G'$ is a finite subgroup
 of $G$.
 Let $N_0$ be an open normal subgroup of $G$
 with $G' \cap N_0 = 1$
 (such $N_0$ exists
 since $G$ is Hausdorff and $G'$ is finite). 
 When $N$ runs through all the open normal subgroups of $G$
 with $(G : N) \le n$,
 there exist only finitely many possibilities of
 open normal subgroups of $G$ with the form $N \cap N_0$
 by Lemma\ \ref{claim-M}.
 Since $N \cap N_0$ is open in $G$,
 this implies that
 there are only finitely many possibilities of
 such $N$.
 Hence $G$ is small.
\end{proof}

We recall a homotopy exact sequence 
of \'etale fundamental groups. 

\begin{proposition}[{\cite{K-L}, Lem.\ 2}]
\label{homotopy exact sequence}
 Let $S$ be a connected normal and locally Noetherian scheme 
 with generic point  $\eta$. 
 Let $X$ be a scheme
 which is smooth and surjective over $S$
 and assume that its geometric generic fiber
 $X_{\ol{\eta}}$ is connected.
 Then the sequence of \'etale fundamental groups
$$
 \fg(X_{\ol{\eta}}) \to \fgX \to \fg(S) \to 1
$$
 is exact.
\end{proposition}

Finally, we prove the following theorem.
\begin{theorem}
\label{thm:small}
 Let  $X$  be a connected scheme
 of finite type and dominant
 over $\bZ$.
 Then the \'{e}tale fundamental group
 $\fgX$ is small.
\end{theorem}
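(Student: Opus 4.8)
The plan is to argue by induction on the dimension $d = \dim X$, reducing the statement to an extension of two small groups and then invoking \reflem{extension-closed}. The base case $d = 1$ is essentially the Hermite--Minkowski theorem: after the reductions below we may take $X$ to be an open subscheme of $\Spec\OF$ for the number field $F = \kX$, and $\fgX$ is then the Galois group of the maximal extension of $F$ unramified outside a finite set of primes, hence small. For the inductive step I would fibre $X$ by curves over an arithmetic base of one lower dimension and combine the smallness of the base (by induction) with the smallness of the fibre.

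For the reductions one uses that $\fgX$ is insensitive to nilpotents, so we may assume $X$ reduced, and that for a normal connected scheme every dense open subscheme $U$ induces a surjection $\pi_1(U)\twoheadrightarrow\fgX$; by \refprop{lem:small}(ii) it then suffices to treat a convenient dense open. Thus I would arrange $X$ to be regular, integral and affine, and choose a dominant morphism $f:X\to Y$ onto a regular affine scheme $Y$, again of finite type and dominant over $\bZ$ with $\dim Y = d-1$, whose generic fibre is a smooth curve. Passing to the Stein factorization of $f$ (which only replaces $Y$ by a finite cover, still covered by the inductive hypothesis) I may assume the geometric generic fibre $X_{\ebar}$ is connected, where $\ebar$ is a geometric generic point of $Y$.

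The heart of the argument is the homotopy exact sequence
$$
  \pi_1(X_{\ebar}) \to \fgX \to \pi_1(Y) \to 1 .
$$
Writing $N$ for the kernel of $\fgX\to\pi_1(Y)$, this identifies $N$ with the image of $\pi_1(X_{\ebar})$, i.e. with a quotient of the fundamental group of the fibre. The crucial point is now that $Y$ is dominant over $\bZ$, so $\kY$ has characteristic $0$; hence $X_{\ebar}$ is a smooth affine curve over an algebraically closed field of characteristic $0$, and $\pi_1(X_{\ebar})$ is a free profinite group of finite rank, in particular topologically finitely generated, hence small by \refprop{lem:small}(iii). Therefore $N$ is small by \refprop{lem:small}(ii). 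Since $\pi_1(Y)$ is small by the inductive hypothesis, \reflem{extension-closed} applied to $1\to N\to\fgX\to\pi_1(Y)\to 1$ yields the smallness of $\fgX$.

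The step I expect to require the most care is the homotopy exact sequence for the fibration $f$. For proper smooth $f$ with geometrically connected fibres this is classical (SGA~1), but after shrinking $X$ the relative curve is only affine, and one must verify that the kernel $N$ is \emph{exactly} the image of the fibre group, with no extra contribution from ``vertical'' ramification along special fibres of $f$; the inertia along the horizontal boundary causes no trouble, as it is already absorbed into $\pi_1(X_{\ebar})$ because the geometric fibre is an \emph{open} curve. I would establish this either by an elementary-fibration argument or by comparing with a smooth compactification $\ol{X}\to Y$ and controlling the kernel of $\fgX\twoheadrightarrow\pi_1(\ol{X})$. The construction of the curve fibration with geometrically connected fibres, and the surjectivity statements used in the reduction to the normal affine case, are more routine but still require the normality hypotheses to be tracked carefully.
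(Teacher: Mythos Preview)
Your approach is correct, but it differs from the paper's in one essential organizational choice. The paper does \emph{not} induct on $\dim X$: it fibres $X$ in a single step over an open $S\subset\Spec\OF$ (where $F$ is the algebraic closure of $\bQ$ in $\kX$), obtains the exact sequence
\[
  \fg(X_{\ol F})\to\fgX\to\fg(S)\to 1,
\]
and then invokes the result of SGA\,7 (Exp.~\RII, Th.~2.3.1) that the fundamental group of an arbitrary variety over an algebraically closed field of characteristic $0$ is topologically finitely generated. Your induction trades this deep input for the elementary fact (Riemann existence) that an affine curve over such a field has free $\pi_1$ of finite rank, at the cost of having to construct a relative-curve fibration $X\to Y$ at each step. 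Both routes end with \reflem{extension-closed}; the paper's is structurally shorter but leans on a stronger black box, yours is more self-contained.

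Two comments on the details. First, your worry about the homotopy exact sequence is largely unnecessary: the version the paper quotes (\reflem{homotopy exact sequence}, from Katz--Lang) requires only that the map be smooth and surjective over a normal base with connected geometric generic fibre---no properness---so once you have shrunk $X$ and $Y$ appropriately the sequence is available as stated, and there is no extra ``vertical'' contribution to analyse. Second, your reduction from connected to normal is understated: the surjectivity $\fg(U)\twoheadrightarrow\fgX$ you want to use already presupposes normality, so you must first pass from $X$ to its normalization. The paper handles this (and the passage to a single irreducible component) via descent theory for effective descent morphisms together with \reflem{lem:small*small}; you should do the same, since the normalization map is only finite, not \'etale, and one cannot simply replace $X$ by $X'$ without this argument. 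Also note that ``Stein factorization'' is a proper-morphism notion; what you actually mean is to replace $Y$ by its normalization in $\kX$, which achieves the same effect of making the generic fibre geometrically connected.
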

\begin{proof}
  We shall first reduce to the case 
  in which $X$  is a normal scheme 
  smooth over $\OF$  of a number field $F$  
  and 
  the geometric generic fiber
 $X_{\ol{F}} := X \otimes_{\OF} \ol{F}$ is connected. 
  
  Since the scheme $X$ is Noetherian,  
  we have finite number of irreducible components $X_1,\ldots ,X_n$ of $X$. 
  The natural morphisms $X_i \to X$ induce 
  a morphism $\sqcup_i X_i \to X$ 
  which is an effective descent morphism.  
  By using the descent theory (\cite{SGA1}, Exp.~IX, Th.~5.1),
  the group $\fgX$ is a quotient of the group 
  generated by  
  $\fg(X_i)$ for all $i$ 
  and finitely many generators. 
  Thus, we may reduce to the case 
  $X$  is irreducible by
 \refprop{prop:small} and
 Lemma~\ref{lem:small*small}. 
  Since we have an isomorphism 
  $\fgX \simeq \fg(X_{\red})$ 
  for the reduced closed subscheme $X_{\red}$  of  $X$ 
  (\opcit, Exp.~IX, Prop.~1.7), 
  we may assume that $X$  is integral. 
  Let $X' \to X$ be the normalization morphism
  of $X$.
  Since $X$ is an excellent scheme,
  it is a finite morphism
 (\cite{EGAIV}, 7.8.3 (iii), 7.8.6 (i), (ii) or
 \cite{Liu}, 8.2.3).
  By the descent theory (\cite{SGA1}, Exp.~IX, Th.~5.1) 
  again, 
  we know that 
  the group $\fgX$  is a quotient 
  of the group generated by $\fg(X')$  
  and finitely many generators. 
  By \refprop{prop:small} and
 Lemma~\ref{lem:small*small},
  we may assume that
  $X$ is normal. 
  Let $F$ be 
  the algebraic closure of $\bQ$ in
  the function field of $X$. 
 Then the geometric generic fiber
  $X_{\ol{F}}$ is connected.
  Since $X$ is dominant over $\OF$, 
  it is flat over $\OF$
  (\cite{Liu}, Chap.\ 4, Prop.\ 3.9).
  For any open subscheme $U$ of $X$,
%
 the induced homomorphism $\fg(U) \ra \fgX$
 is surjective.
  Therefore, shrinking $X$ if necessary,
  we may assume that $X$ is smooth over $\Spec (\OF)$. 
  
  Let $S$  be the image of $X$ by the structure morphism 
  $X\to \Spec(\OF)$. 
  Since $X$  is flat over $\OF$,
  the set $S$ is open in $\Spec(\OF)$ (\cite{EGAIV}, 2.4.6). 
  Note that the complement of $S$ is a finite set 
  since $\OF$ is a Dedekind domain.
  By \refprop{homotopy exact sequence},
  we have the following exact sequence:
  $$
    \fg(X_{\ol{F}}) \ra \fgX \ra \fg(S) \ra 1.
  $$
  The fundamental group $\fg(S)$ is small
  by the Hermite-Minkowski theorem and 
  $\fg(X_{\ol{F}})$
  is topologically finitely generated 
  (\cite{SGA7-I}, Exp.\ \RII, Th.\ $2.3.1$). 
  Hence $\fgX$ is small by \reflem{extension-closed}.
\end{proof}

\begin{remark}
  The same argument works 
  for a connected scheme $X$  of finite type 
  over a local field $k$ with characteristic $0$. 
  Since the group $\fg(\Spec(k))$ 
  is known to be topologically finitely generated, 
  so is $\fg(X)$. 
\end{remark}

\section{Fundamental groups with modulus}
\label{sec:modulus}
The notion of coverings with restricted ramification 
defined in \cite{Hiranouchi}
is not stable under base change 
even if our attention restricts to tame covers 
(\Cf \cite{Schmidt}, Exam.\ 1.3). 
In this section, 
we shall introduce the notion of 
coverings of {\it ramification bounded by a modulus}\/ 
which is a slight modification of 
G.~Wiesend's tame coverings (\cite{Wiesend:tame}). 
Such coverings are stable under base change 
and form a Galois category.

Following \cite{KS}, 
we denote by $\SchZ$ 
the category of schemes separated and of finite type 
over $\Spec(\bZ)$. 
We call $X\in \SchZ$ {\it flat}\/  
if its structural morphism $X\to \Spec(\bZ)$ is flat, 
and {\it vertical} (or a {\it variety}) if the structural morphism 
factors through $\Spec(\bFp)$  for some prime number $p$. 
Note that an integral scheme $X\in \SchZ$ is either flat or vertical. 
A {\it curve}\/ is 
an integral scheme in $\SchZ$ of dimension $1$. 
A {\it curve on}\/ a scheme $X$  in $\SchZ$ is 
a closed subscheme of $X$ which is a curve. 
For any curve  $C$  on  $X$, 
let  
$\kC$  be the function field of  $C$ and  
$\tC$  the normalization of  $C$. 
If the curve $C$ is regular, 
there exists a unique regular curve $\PC\in \SchZ$ 
which is proper over $\bZ$  and contains $C$  
as a dense open subscheme (\Cf \cite{Kerz}, Prop.\ 1.6). 
We always identify closed points $x$ in $\PtC$  
with the normalized valuations in $k(C)$ associated with $x$. 
In particular, 
we write $k(C)_x$  the completion of $k(C)$ 
with respect to the valuation corresponding to a closed point $x \in \PtC$. 

\begin{definition}
\label{def:modulus}
  For every curve  $C$  on a scheme $X \in \SchZ$, 
  a {\it modulus $\m_C$ on  $C$} 
  is a finite set $(m_{C,x})_{x \in \PtC \ssm \tC}$ 
  of non-negative integers $m_{C,x}$ 
  associated with closed points $x$ in $\PtC \ssm \tC$. 
  A {\it modulus $\m = (\m_C)_{C \subset X}$ on}\/ $X$ 
  is a collection of moduli  $\m_C$  
  associated with curves $C$  on  $X$.  
\end{definition}

 Note that
 the sets of points and of curves
 on a scheme $X \in \SchZ$ 
 are at most countable,
 since a countable Noetherian ring has at most 
 countably many prime ideals  (cf. \cite{KS}, Sect.\ 7). 
 By using the notion of modulus, 
 we restrict the ramification of coverings as follows: 

\begin{definition}
\label{def:ram}
  ($\mathrm{i}$) 
  Let  $K$  be a 
  complete discrete valuation field, 
  $G_K$  the absolute Galois group of  $K$ 
  and  $L$  a separable extension field of $K$. 
  For any rational number  $m>-1$, 
  we say that the {\it ramification of  $L/K$  has bounded by  $m$} 
  if  $G_K^m \subset G_L$, 
  where $G_K^m$  is the $m$-th ramification subgroup of $G_K$ 
  in the upper numbering (\cite{Serre:68}, Chap.\ IV, Sect.\ 3). 

  \sn
  ($\mathrm{ii}$) 
  For each regular curve  $C$,  
  let $\m_C = (\m_{C,x})_x$ be a modulus on $C$. 
  A finite \'etale morphism  $C' \to C$  is said to be 
  of {\it ramification bounded by}\/  $\m_C$
  if the extension of complete discrete valuation fields  $k(C')_{x'} /\kC_x$  
  has ramification bounded by  $m_{C,x}$  
  for each  $x \in P(C) \ssm C$  and 
  for each  valuation  $x'$  of  $k(C')$  over  $x$. 

  \sn
  ($\mathrm{iii}$)
  Let $X\in \SchZ$ be connected, 
  and $\m$ a modulus on $X$. 
  A finite \'etale morphism  
  $Y \to X$  is said to be of 
  {\it ramification bounded by}  $\m$
  if for every curve  $C$  on  $X$  and 
  for each irreducible component  $C'$  of  $\tC\times_X Y$, 
  the ramification of the induced morphism  $C'\to \tC$  
  is bounded by $\m_C$.
\end{definition}


Let $X\in \SchZ$ be connected. 
Choose a closed point $x$ in $X$ and take a geometric point 
$\xi :\Spec(\Omega) \to x$, 
where $\Omega$  is a separably closed extension of the residue field at $x$. 
We define a fiber functor $F_x$  by $F_x(Y) = \mathrm{Hom}_X(\Spec(\Omega), Y)$
for any covering  $Y \to X$  of ramification bounded by $\m$.  

\begin{lemma}
The category of coverings over $X$  with ramification bounded by a modulus $\m$  
together with the fiber functor $F_x$ 
is a Galois category.
\end{lemma}
\begin{proof}
 We have to check the conditions (G1) - (G6) 
of \cite{SGA1}, Expos\'e V, Section 4.\ 
Let  $Y_1, Y_2 \to X$  be coverings 
of ramification bounded by $\m$, 
and  $C$ a curve on  $X$.  
The covering   
$\tC \times_X (Y_1 \times_X Y_2) \to \tC$  
 is of ramification bounded by  $\m_C$  (\cite{Hiranouchi}, Lem.\ 2.2). 
Thus, the category of these coverings over $X$ is 
closed under fiber products. 
Let  $Y_1\to Y_2$  and  $Y_2 \to X$  be two finite \'etale morphisms 
such that the composite  $Y_1 \to X$  
is a covering of ramification bounded by $\m$.  
Then the cover  $Y_2\to X$ is of ramification bounded by $\m$  
(\opcit, Lem.\ 2.2 and Lem.\ 2.4). 
 As in the proof of Theorem 2.4.2 in \cite{208},  
the above arguments 
imply 
(G1) and (G2), namely, 
the existence of fiber products and quotients respectively 
in the category. 
The assertions (G3) - (G6)  
can be deduced by the straight forward manner.
\end{proof}

 We denote by  $\fgXm$ 
 the fundamental group associated to the Galois category 
of coverings of $X$ with ramification bounded by $\m$. 
The notion of the coverings above is stable under base change 
as follows: 
Let $f:X'\to X$ be a morphism of connected schemes in $\SchZ$ and  
$\m = (\m_C)_{C \subset X}$ a modulus on $X$. 
For each curve $C'$ on $X'$ and 
closed point $x'$ in $P(\wt{C'}) \ssm \wt{C'}$, 
if $f(C')$ is a closed point of $X$ then put $\m_{C'} := 0$.
On the other hand, 
if the topological closure $\ol{f(C')} = C$ of $f(C')$  is a curve on $X$, then 
put $m_{C',x'} := e_{x'/x}m_{C,x}$ for $f(x') = x$ , 
where $e_{x'/x}$  is the ramification index 
of the extension $k(C')_{x'}/k(C)_x$. 
We put $f^{\ast}\m := (\m_{C'})_{C'\subset X'}$, 
where $\m_{C'} = (m_{C',x'})$.   

\begin{lemma}
\label{lem:modulus} 
Let $f:X'\to X$ be a morphism of connected schemes in $\SchZ$ 
and $\m$ a modulus on $X$. 
 Then the morphism $f$ induces a group homomorphism 
$\fg(X',f^{\ast}\m) \to \fgXm$.
\end{lemma}
\begin{proof}
Let  $Y\to X$  be a covering of $X$ 
of ramification bounded by $\m$.  
It is enough to show that 
the induced covering $Y\times_X X' \to X'$ of $X'$ 
has the modulus $f^{\ast}\m = (\m_{C'})_{C' \subset X'}$ 
for each curve $C'$ on $X'$. 
If  $f(C')$  is a closed point of  $X$,  
the covering $Y\times C' \to C'$ 
induces a separable base field extension and unramified.  
If the topological closure $\ol{f(C')}$ of $f(C')$  
is a curve  $C$  on  $X$, 
it is a base change of  $\tC\times_X Y \to \tC$ 
whose ramification is bounded by $\m_C$. 
Thus, the induced covering 
$\wt{C'} \times_X Y \to \wt{C'}$ has ramification bounded 
by $\m_{C'}$.
\end{proof}

For two moduli $\m = (\m_C)_{C}$ and $\m' = (\m'_C)_{C}$ on $X$, 
we write $\m'\ge \m$  
if we have $m'_{C,x} \ge m_{C,x}$ 
for each curve $C$ on $X$ and $x\in \PtC \ssm \tC$. 
Thereby we have a surjection $\fg(X,\m') \to \fg(X,\m)$. 
Using this notation, 
we have the following homotopy exact sequence. 

\begin{lemma}\label{lem:hes}
  Let $S\in \SchZ$ be connected normal with generic point $\eta$, 
  and $f:X\to S$  a smooth morphism in $\SchZ$. 
  Assume that the geometric generic fiber $X_{\ebar}$ of $f$ 
  is connected and the map $f$ has a section  $s: S \to X$. 
  Then,
  for any modulus $\m$ on $X$
  there is a modulus $\m'\ge \m$ on $X$ such that 
  the sequence of fundamental groups
  $$
    \fg(X_{\ebar}) \to \fg(X,\m') \to \fg(S, s^{\ast}\m) \to 1
  $$
  is exact.
\end{lemma}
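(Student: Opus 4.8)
The plan is to descend the classical homotopy exact sequence (\reflem{homotopy exact sequence}) to the quotients with modulus, where the choice of $\m'$ is forced by two competing constraints. I would set
$$\m' := \max(\m,\, f^{\ast}s^{\ast}\m),$$
the modulus on $X$ whose coefficient at each boundary point of each curve is the larger of the corresponding coefficients of $\m$ and of $f^{\ast}s^{\ast}\m$. By construction $\m'\ge\m$ and $\m'\ge f^{\ast}s^{\ast}\m$. The crucial computation is that $s^{\ast}\m'=s^{\ast}\m$: since $s$ is a section it is a closed immersion, so for a curve $D$ on $S$ the image $s(D)$ is a curve on $X$, and $f\circ s=\mathrm{id}$ forces $f\colon s(D)\ra D$ to be an isomorphism with all ramification indices equal to $1$; hence $(f^{\ast}s^{\ast}\m)_{s(D)}=\m_{s(D)}$ and therefore $\m'_{s(D)}=\m_{s(D)}$. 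Because $s^{\ast}\m'$ only sees $\m'$ along the curves $s(D)$, this gives $s^{\ast}\m'=s^{\ast}\m$. The use of the maximum (rather than, say, a sum) is exactly what keeps $\m'$ equal to $\m$ along the image of the section while enlarging it elsewhere.

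With this choice, the inequality $\m'\ge f^{\ast}s^{\ast}\m$ guarantees, via the base-change stability of coverings with modulus (\reflem{lem:modulus}), that pullback along $f$ sends a covering of $S$ with modulus $s^{\ast}\m$ to a covering of $X$ with modulus $\m'$; this well-defines a homomorphism $\bar b\colon\fg(X,\m')\ra\fg(S,s^{\ast}\m)$ sitting in a commutative square over the classical map $\fg(X)\ra\fg(S)$. Surjectivity of $\bar b$ is then a diagram chase: $\fg(X)\ra\fg(S)$ is surjective by \reflem{homotopy exact sequence} and $\fg(S)\ra\fg(S,s^{\ast}\m)$ is surjective, so the composite $\fg(X)\ra\fg(S,s^{\ast}\m)$ is surjective and factors through $\bar b$. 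For exactness at $\fg(X,\m')$, the inclusion $\overline{\mathrm{im}\,\bar a}\subseteq\Ker\bar b$, with $\bar a$ the map from $\fg(X_{\ebar})$, holds because $\fg(X_{\ebar})\ra\fg(S,s^{\ast}\m)$ factors through the classical composite $\fg(X_{\ebar})\ra\fg(S)$, which is already trivial.

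The substantial direction is $\Ker\bar b\subseteq\overline{\mathrm{im}\,\bar a}$, which I would formulate inside the Galois category: it suffices to show that every covering $Y\ra X$ with modulus $\m'$ that becomes trivial over the geometric generic fiber $X_{\ebar}$ is the pullback $f^{\ast}Z$ of some covering $Z\ra S$ with modulus $s^{\ast}\m$. That $Y$ descends to an \'etale covering $Z\ra S$ (forgetting the modulus) is exactly the content of \reflem{homotopy exact sequence}, since $Y|_{X_{\ebar}}$ is trivial. To recover $Z$ together with its modulus I would invoke the section: from $f\circ s=\mathrm{id}$ one gets $Z\cong s^{\ast}f^{\ast}Z\cong s^{\ast}Y$, and by base-change stability (\reflem{lem:modulus}) the covering $s^{\ast}Y\ra S$ has modulus $s^{\ast}\m'$. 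The identity $s^{\ast}\m'=s^{\ast}\m$ from the first step is what closes the argument: it shows $Z\cong s^{\ast}Y$ has modulus $s^{\ast}\m$, so $Z$ lies in the Galois category for $(S,s^{\ast}\m)$ and $Y=f^{\ast}Z$ indeed comes from $\fg(S,s^{\ast}\m)$.

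I expect the main obstacle to be precisely this interplay in the definition of $\m'$. It must be large enough that coverings pulled back from $(S,s^{\ast}\m)$ remain admissible on $X$, which forces $\m'\ge f^{\ast}s^{\ast}\m$ for $\bar b$ to exist and be surjective; yet it must \emph{not} be enlarged along the section, because the descended covering $s^{\ast}Y$ carries modulus $s^{\ast}\m'$ and we need this to be exactly $s^{\ast}\m$. Reconciling these through the maximum, and verifying $s^{\ast}\m'=s^{\ast}\m$ from $f\circ s=\mathrm{id}$, is the technical heart; everything else is a formal consequence of \reflem{homotopy exact sequence} and \reflem{lem:modulus}.
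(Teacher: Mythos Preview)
Your proof is correct and follows essentially the same approach as the paper's: both choose $\m'$ satisfying $\m'\ge\m$, $\m'\ge f^{\ast}s^{\ast}\m$, and $\m'_{s(D)}=\m_{s(D)}$ for curves $D\subset S$, then descend the classical homotopy exact sequence and verify the descended cover has modulus $s^{\ast}\m$ via the section. Your treatment is in fact more explicit than the paper's---you exhibit $\m'=\max(\m,f^{\ast}s^{\ast}\m)$ and carefully verify $s^{\ast}\m'=s^{\ast}\m$ from $f\circ s=\mathrm{id}$, whereas the paper simply asserts such an $\m'$ exists and that ``the definition of the modulus $s^{\ast}\m$'' yields the required bound.
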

\begin{proof}
  We basically follow the proof of Lemma 2 in \cite{K-L}. 
  For each curve  $D$  on  $S$, 
  its image  $s(D)$  of the section  
  $s: S\to X$  is a curve on  $X$.  
  Therefore we have 
  $s^{\ast}\m = (\m_{s(D)})_{D \subset S}$.   
  By Lemma~\ref{lem:modulus}, 
  the section $s:S\to X$ 
  induces a homomorphism  
  $\fg(S,s^{\ast}\m) \to \fgXm$. 
  In general, we do not have a homomorphism 
  $\fg(X,\m) \to \fg(S,s^{\ast}\m)$. 
  By the very definition of the modulus  
  $f^{\ast}s^{\ast}\m$, we can take a modulus $\m'$  on $X$  
  with $\m'\ge \m, \m'\ge f^{\ast}s^{\ast}\m$ 
  and satisfying $\m'_{s(D)} = \m_{s(D)}$  
  for each curve $D$ on $S$.  
 By \reflem{lem:modulus}
  we obtain a composition of homomorphisms
 $\beta: \fg(X,\m') \to \fg(X,f^{\ast}s^{\ast}\m)
 \to \fg(S,s^{\ast}\m)$. 
  By \refprop{homotopy exact sequence}, 
  we have the following commutative diagram
  $$
  \begin{CD}
     \fg(X_{\ebar}) @>>>  \fg(X) @>>>  \fg(S) @>>>  1\ \\
      @| @VVV @VVV \\
     \fg(X_{\ebar}) @>\a>>  \fg(X,\m') @>\b>>  \fg(S,s^{\ast}\m) @>>>  1,
  \end{CD}
  $$
  whose top row is exact.  
  Therefore  $\beta$  is surjective, and  $\b\circ \a = 0$. 
  To prove the exactness of the bottom row, we must show that 
  for any connected covering  $X'\to X$  
  with modulus  $\m'$  
  which admits a section over  $X_{\ebar}$,  
  there exists a connected covering  $S' \to S$  
  with modulus  $s^{\ast}\m$  
  such that  $X' = X\times_S S'$. 
  By the exactness of the top row of the above diagram, 
  we obtain a finite \'etale cover  $S'\to S$  
  with $X' = X\times_S S'$. 
  For each curve $D$  on $S$, 
  the covering $X'\times_X s(D) \to s(D)$ 
  is ramification bounded by $\m'_{s(D)} = \m_{s(D)}$. 
 Thus, 
 the covering $S' \to S$ has ramification
 bounded by the modulus $s^{\ast}\m$
 and the assertion follows.
\end{proof}

Now, we examine 
the smallness of fundamental groups with modulus. 
The fundamental group  $\fgXm$  
is a quotient of $\fgX$.  
Thereby it is small for any flat and connected scheme $X\in \SchZ$  
by \refprop{prop:small}, (ii)  and Theorem \ref{thm:small}. 
When the scheme  $X$  is vertical, 
we need the assumption on  $X$  
as in \cite{KS}, Theorem\ 8.2. 
%
\begin{theorem}
  \label{thm:G-small}
  Let\/  $X$  be a connected variety over a finite field $k$. 
  Assume that there exist an \'etale morphism 
  $X'\to X$ and a proper morphism 
  $X' \to Z$, where $Z$ is a curve over $k$ 
  with generic point $\eta$. 
  Then $\fgXm$ and  
  the geometric part $\fgXmg := \Ker(\fgXm \to G_k)$ 
  of $\fgXm$ are small
  for any modulus  $\m$  on  $X$.
\end{theorem}

\begin{proof}
  For any \'etale morphism $f:X'\to X$, 
  the induced map $\fg(X')\to \fg(X)$  has finite cokernel 
  and 
  so is $\fg(X',f^{\ast}\m)\to \fg(X,\m)$. 
  Hence we may assume that 
  we have a proper morphism $g:X\to Z$ over $k$. 
 We may further assume that $g:X\to Z$ is surjective.
  By the descent theory for \'etale fundamental groups, 
  we may assume that the variety  $X$  is normal and 
  the geometric generic fiber $X_{\ebar}$  of  $g:X\to Z$  is connected 
  as in the proof of Theorem\ \ref{thm:small}. 
 Let $U$ be the smooth locus of $g : X \to Z$
 and $i : U \ra X$ the open immersion.
 Note that $U$ is an open subscheme of $X$
 (\cite{EGAIV}, 17.15.12),
 and the induced morphism $g|_U:U \to Z$
 is still surjective.
 By \opcit, 17.16.3, the restriction $g|_U:U\to Z$ admits a 
  section over an \'etale open $Z'$ of $Z$. 
 Hence we may assume that
 the smooth surjective morphism $g|_U:U\to Z$
 has a section $s : Z \ra U$.
 Now we replace the modulus $\m$
 by $\m''$ so that it satisfies
 $\m'' \ge \m,$ $\m'' \ge g^{\ast}s^{\ast}i^{\ast}\m$
 and $\m''_{\ol{s(Z)}} = (g^{\ast}s^{\ast}i^{\ast}\m)_{\ol{s(Z)}}=\m_{\ol{s(Z)}}$,
 where $\ol{s(Z)}$ is the topological closure of $s(Z)$ in $X$.
 Since the geometric generic fiber  $U_{\ebar}$  is irreducible, 
 we have the following exact sequence by \reflem{lem:hes}
 for some modulus $\m'\ge i^{\ast}\m$  on $U$: 
  $$
   \fg(U_{\ebar}) \to \fg(U,\m')  \to \fg(Z,s^{\ast}i^{\ast}\m)\to 1. 
  $$
 The natural homomorphisms
 $\fg(U,\m') \ra \fg(X,\m)$ and
 $\fg(U_{\ebar}) \ra \fg(X_{\ebar})$
 are surjective.
 Hence the sequences
  $$
   \fg(X_{\ebar}) \to \fg(X,\m)  \to \fg(Z,s^{\ast}i^{\ast}\m)\to 1
  $$
 and
  $$
    \fg(X_{\ebar}) \to \fg(X,\m)^{\geo} \to \fg(Z,s^{\ast}i^{\ast}\m)^{\geo} \to 1
  $$
 are also exact.
  The fundamental group
 $\fg(Z,s^{\ast}i^{\ast}\m)^{\geo}$  is small 
  (\Cf \cite{Goss}, Th.~8.23.5) and 
  $\fg(X_{\ebar})$  is topologically finitely generated 
  (\cite{SGA1}, Exp.~X, Th.~2.9) 
  since the fiber  $X_{\ebar}$  is proper.
  Thus, the assertion follows from \reflem{extension-closed}.
\end{proof}
\begin{remark}
  It is known that the tame fundamental groups for curves 
  over a finite field are topologically finitely generated. 
  By the same manner as in the proof of
  the above theorem, 
  for a variety $X$ over a finite field 
  which is \'etale locally proper over a curve,  
  we can prove that the tame fundamental group $\fg^{\tame}(X)$  
  is also topologically finitely generated.
\end{remark}

\section{Application to representations of fundamental groups}
\label{sec:application}
 As an application of the Hermite-Minkowski theorem
 and the finiteness of ray class groups, 
 the following finiteness of Galois representations 
 over an algebraically closed field $\bk$ is obtained
 (for the geometric version of this result,
 see \cite{M-T}, Th.\ $4$ $(\mathrm{i})$). 
 Here, we always consider the general linear group $\GL_d(\bk)$ 
 as a topological group with the discrete topology.

\begin{theorem}[\cite{A-G-D-R-G}, see also \cite{M-T}, Th.\ $1$]
\label{conj.1dim} 
 Let $\bk$  be an algebraically closed field and
 $F$ a number field.
 Suppose that the characteristic of $\bk$ is $0$.
 For any positive integer $d$, 
 there exist only finitely many isomorphism classes
 of continuous semisimple representations
 $\rho:G_F \to \GL_d(\bk)$ with bounded Artin conductor.
\end{theorem}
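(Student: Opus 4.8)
The plan is to combine three classical ingredients --- the smallness of $G_{F,S}$ (Hermite--Minkowski), Jordan's theorem on finite linear groups, and the finiteness of ray class groups --- so as to reduce an arbitrary semisimple $\rho$ to a single Galois character, for which the finiteness is transparent. First I would record what the conductor bound buys us. Since $\GL_d(\bk)$ carries the discrete topology, any continuous $\rho\colon G_F\to\GL_d(\bk)$ has finite image $\Gamma:=\rho(G_F)$, so $\rho$ factors through a finite Galois extension. A bound $N(\mathfrak{f}(\rho))\le B$ on the Artin conductor forces $\rho$ to be unramified outside the finite set $S$ of primes of norm $\le B$; hence $\rho$ factors through $G_{F,S}$, which is small. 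As the Artin conductor is multiplicative for direct sums, $\mathfrak{f}(\rho_1\oplus\rho_2)=\mathfrak{f}(\rho_1)\mathfrak{f}(\rho_2)$, and a semisimple $\rho$ is the sum of its irreducible constituents (each of dimension $\le d$ and conductor dividing $\mathfrak{f}(\rho)$), it suffices to bound the number of irreducible representations of each dimension $e\le d$ with conductor of norm $\le B$; finitely many multisets of such constituents then sum to dimension $d$.

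So fix an irreducible $\rho$ of dimension $e\le d$. By Jordan's theorem there is an abelian normal subgroup $A\lhd\Gamma$ with $(\Gamma:A)\le J(d)$, where $J(d)$ depends only on $d$. Its preimage $H:=\rho^{-1}(A)$ is an open normal subgroup of $G_{F,S}$ of index $\le J(d)$, fixing a number field $F_H/F$ of degree $\le J(d)$ unramified outside $S$. By the smallness of $G_{F,S}$ there are only finitely many open normal subgroups of bounded index, hence only finitely many such $H$ and fields $F_H$. On $H$ the image $\rho(H)=A$ is abelian, so $\rho|_H$ splits into characters over $\bk$; by Clifford theory and Frobenius reciprocity $\rho$ embeds into $\mathrm{Ind}_{H}^{G_{F,S}}\chi$ for one of these finite-order characters $\chi\colon G_{F_H}\to\bk^{\times}$, and $\rho$ is recovered as one of the finitely many irreducible summands of this induced representation of dimension $\le J(d)$. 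Thus it remains to bound the characters $\chi$ that can occur.

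The last step is to show each relevant $\chi$ has bounded conductor and that such characters are finite in number. The Artin formula for the conductor of an induction, $\mathfrak{f}(\mathrm{Ind}_{F_H}^{F}\chi)=\mathfrak{d}_{F_H/F}\cdot N_{F_H/F}\big(\mathfrak{f}(\chi)\big)$, together with the corresponding control of the conductor under restriction to $F_H$, bounds $N(\mathfrak{f}(\chi))$ in terms of $B$ and the (finitely many, hence uniformly bounded) relative discriminants $\mathfrak{d}_{F_H/F}$. A finite-order character of $G_{F_H}$ whose conductor divides a fixed modulus $\mathfrak{m}$ factors through the ray class group $\mathrm{Cl}_{\mathfrak{m}}(F_H)$, which is finite by class field theory; since only finitely many moduli $\mathfrak{m}$ have bounded norm, there are only finitely many admissible $\chi$ for each of the finitely many $F_H$. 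Assembling these finite choices gives finitely many irreducible $\rho$, and hence finitely many semisimple $\rho$ of dimension $d$.

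I expect the main obstacle to be precisely this conductor bookkeeping. Smallness of $G_{F,S}$ by itself does \emph{not} bound the number of finite-order characters --- already $\wh{\bZ}$ (which is small, being topologically generated by one element) carries infinitely many characters of finite order --- so the real leverage of the conductor hypothesis is arithmetic rather than group-theoretic, and the crux is to convert the bound on $\mathfrak{f}(\rho)$ into a genuine bound on $\mathfrak{f}(\chi)$ through the behaviour of conductors under induction and restriction. Everything else reduces, via Jordan and Clifford, to the finiteness of $\mathrm{Cl}_{\mathfrak{m}}(F_H)$ and to the Hermite--Minkowski finiteness of the intermediate fields $F_H$.
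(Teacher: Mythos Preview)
Your proof is correct and rests on the same three ingredients the paper uses --- Jordan's theorem, smallness via Hermite--Minkowski, and the finiteness of ray class groups --- but the logical route differs. The paper states this theorem as a cited result and instead proves the generalization Theorem~\ref{Arith : Finiteness of Reps}(i), which specializes to the present statement when $X=\Spec\OF$. There the argument is shorter: rephrase the conductor bound as a modulus condition so that $\rho$ factors through $\fgtXm$; apply Jordan to get an open normal $N$ of index $\le J(d)$ with $N/\Ker(\rho)$ abelian; then simply observe that $[N,N]\subset\Ker(\rho)\subset N$ and that $N^{\ab}$ is finite (being a quotient of $\fgt(X',\m')^{\ab}$ for the cover $X'$ corresponding to $N$, finite by Corollary~\ref{cor:CFTma}). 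This pins $\Ker(\rho)$ down to finitely many possibilities without ever invoking Clifford theory or induced representations.

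Your detour through Clifford--Frobenius to reduce to a single character $\chi$ is perfectly valid and closer in spirit to the original arguments in \cite{A-G-D-R-G} and \cite{M-T}, but it is what forces the conductor bookkeeping you correctly flag as the delicate point. One caution there: the induction formula you quote goes the wrong way, since $\rho$ is only a \emph{summand} of $\mathrm{Ind}_H^G\chi$, so bounding $\mathfrak{f}(\rho)$ does not bound $\mathfrak{f}(\mathrm{Ind}\,\chi)$ from above. The working direction is restriction: $\chi$ is a constituent of $\rho|_H$, hence $\mathfrak{f}(\chi)\mid\mathfrak{f}(\rho|_H)$, and the latter is controlled by $\mathfrak{f}(\rho)$ together with the ramification of $F_H/F$. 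The paper's modulus framework sidesteps this entirely, since pulling back the modulus along $X'\to X$ is functorial and no local computation is needed. Your closing remark about $\wh{\bZ}$ is exactly the right diagnosis and matches the paper's reliance on the arithmetic finiteness of $\fgtXmab$ rather than on smallness alone.
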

In the case where the characteristic of
$\bk$  is $p> 0$,
 H.~Moon and Y.~Taguchi proved
 the finiteness of mod $p$ Galois representations 
 with solvable images over  $\bk$
 both for the number field case
 and for the function field case 
(\cite{M-T}, Th.\ $2$, Th.\ $4$ $(\mathrm{ii})$). 
 For the function field case,
 the finiteness has been obtained in almost all the cases
 by G.~B{\"o}ckle and C.~Khare
 (see \cite{B-K}).  

 By using the fundamental groups
 with modulus 
 defined in Section~\ref{sec:modulus} 
 instead of the Artin conductor,  
 the finiteness of mod $p$ Galois representations
 is equivalent
 to the finiteness of representations
 of the fundamental group 
 $\fg(X,\m)$,
 where
 $X$ is a curve in $\SchZ$
 and $\m$ is a modulus on $X$. 
 
\begin{definition}
\label{def:commutator}
For any topological group $G$ and non-negative integer $r$, 
define the {\it $r$-th commutator subgroup} $G^{(r)}$ of $G$ 
by the topological closure of 
the commutator subgroup $[G^{(r-1)},G^{(r-1)}]$  
in $G^{(r-1)}$ for $r\ge 1$, and $G^{(0)}:=G$. 
\end{definition}

 Now we prove the finiteness of $\fgXm/\fgXm^{(r)}$
 by reducing to the one-dimensional case.
 The following proposition is essential for the proof.

\begin{proposition}[\cite{KS}, Th.\ 2.9 (i)] 
\label{prop:KS2.9}
Let $X\in \SchZ$ be connected normal. 

\sn 
$\mathrm{(i)}$ If $X$ is flat, 
 then there exists a horizontal curve $C$ on $X$
 such that the induced homomorphism
$\fg(C)^{\ab} \ra \fg(X)^{\ab}$ has an open image.
 
\sn
$\mathrm{(ii)}$  
 If $X$ is vertical, 
 assume that there exist an \'etale open $X' \ra X$ and
 a proper generically smooth morphism $X' \ra Z$
 to a regular curve $Z$.
 Then we find a curve $C$ on $X$
 with the same property as in $\mathrm{(i)}$.
\end{proposition}

\begin{lemma}
\label{lem:solv}
Let  $X \in \SchZ$  be connected normal 
and $\m$ a modulus on $X$. 

\sn
$(\mathrm{i})$ 
 If $X$  is flat, then 
 $\fgXm/ \fgXm^{(r)}$  is finite 
 for any integer $r \ge 1$.

\sn
$(\mathrm{ii})$ 
 If $X$  is vertical over a finite field $k$ 
 we assume that there exist an \'etale open $X' \ra X$ and
 a proper generically smooth morphism $X' \ra Z$
 to a regular curve $Z$ over $k$. 
 Then 
 $\fgXm^{\geo}/\fgXm^{\geo,(r)}$ is finite 
 for any integer $r \ge 1$.  
\end{lemma}
\begin{proof}
\sn
$(\mathrm{i})$ 
In the case of  $r=1$, 
we write $\fgXm/\fgXm^{(r)} = \fgXmab$. 
By Proposition\ \ref{prop:KS2.9}
there exists a flat curve $C$ on $X$
such that the induced homomorphism
$\fg(C,\m_C)^{\ab} \ra \fgXm^{\ab}$ 
has open image.
Hence, 
we have the following commutative diagram
$$
\begin{CD}
 \fg(C)^{\ab} @>>> \fg(X)^{\ab}\\
@VVV @VVV \\
\fg(C,\m_C)^{\ab} @>>> \fgXm^{\ab}.
\end{CD}
$$
 Since $\fg(C,\m_C)^{\ab}$ is finite 
 and the homomorphism $\fg(C, \m_C)^{\ab} \ra \fgXmab$
 has finite cokernel,
 $\fgXmab$ is also a finite group.

By induction on $r$, 
we may assume that $\fgXm/ \fgXm^{(r-1)}$  
is finite. 
Take a Galois cover  $f : X'\to X$  corresponding to
  $\fgXm^{(1)}$. 
 We have a surjection  $\fg(X',\m') \to \fgXm^{(1)}$
  for   the modulus  $\m':= f^{\ast}\m$  on  $X'$, 
and it induces a surjection  
$$
\fg(X',\m')/\fg(X',\m')^{(r-1)} \to \fgXm^{(1)}/\fgXm^{(r)}.
$$
Thus  the assertion follows from 
the hypothesis and the finiteness of $\fgXmab = \fgXm/\fgXm^{(1)}$.

\sn
$(\mathrm{ii})$ 
In the case of  $r=1$, 
we have the desired result
by the same manner as in (i). 
For $r>1$, 
by replacing $k$ with a finite extension field over $k$
we may assume that the exact sequence
$$
 1 \ra \fgXm^{\geo} \ra \fgXm \ra G_k \ra 1
$$
splits.
 Since $\fgXm^{\geo,(r)}$ is a characteristic
 subgroup of $\fgXm^{\geo}$,
 the semidirect product $\fgXm^{\geo,(r)} \rtimes G_k$
 exists and it is a normal subgroup of $\fgXm$.
 Let $f:X' \ra X$ be a finite Galois cover
 corresponding to the normal subgroup
 $\fgXm^{\geo,(1)} \rtimes G_k$.
 As in (i) this induces a surjective homomorphism
$$
\fg(X',\m')^{\geo}/\fg(X',\m')^{\geo,(r-1)} \to \fgXm^{\geo,(1)}/\fgXm^{\geo,(r)}.
$$
 Hence we obtain the finiteness of
 $\fgXm^{\geo,(1)}/\fgXm^{\geo,(r)}$
 by induction on $r$.
\end{proof}

\begin{theorem}
\label{Finiteness of Reps ch=p} 

Let\/ $\bk$  be an algebraically closed field 
with arbitrary characteristic,
%
 $X \in \SchZ$ connected normal, 
 $d$ a positive integer and 
$\m$ a modulus on  $X$.

\sn
$(\mathrm{i})$ 
 If $X$ is flat, then
 there exist only finitely many isomorphism classes
 of semisimple continuous representations
 $\rho: \fgXm \ra \GL_d(\bk)$ 
 with solvable images.

\sn
$(\mathrm{ii})$
 If  $X$  is vertical over a finite field $k$ 
 we assume that
 there exist an \'etale open $X' \ra X$ and
 a proper generically smooth morphism $X' \ra Z$
 to a regular curve $Z$ over $k$.
 Then
 there exist only finitely many isomorphism classes
 of semisimple continuous geometric representations
 $\rho: \fgXm \ra \GL_d(\bk)$ with solvable images.
\end{theorem}
\begin{proof}
(i) 
 For any group $G$,
 {\it the solvability class of} $G$ is the minimal integer
 $i \ge 0$ such that $G^{(i)} = 1$.
 Note that
 every solvable subgroup $G$ of $\GL_d(\bk)$ has
 solvability class $\le s$,
 where $s$ is a positive integer depending only on $d$ 
 (\cite{Suprunenko}, Chap.\ V, Sect.\ $20$, Th.\ $8$).
 Thus every continuous representation
 $\rho : \fgXm \ra \GL_d(\bk)$ with solvable image 
 factors through
 the quotient group of $\fgXm$ by
 $\fgXm^{(s)}$.
 Note that
 this quotient group is finite
 by Lemma~\ref{lem:solv} (i).
 Thus the assertion $(\mathrm{i})$ follows.

\sn
$(\mathrm{ii})$
 By replacing the finite field $k$
 with a finite extension field, 
 we may assume that $X$ has a $k$-rational point. 
 Thus,
 the canonical exact sequence
$$
 1 \ra \fgXm^{\geo} \ra \fgXm \ra G_k \ra 1
$$
 splits.
 Then we see that
 the restriction map
 from the set of isomorphism classes of
 continuous semisimple geometric representations
 $\rho : \fgXm \ra \GL_d(\bk)$
 into the set of isomorphism classes of
 continuous semisimple representations of $\fgXm^{\geo}$
 into $\GL_d(\bk)$
 is bijective.
 As we see in (i),
 every continuous semisimple representation
 $\rho : \fgXm^{\geo} \ra \GL_d(\bk)$
 factors through the quotient group
 $\fgXm^{\geo}/\fgXm^{\geo,(s)}$ for some $s$,
 which is finite by \reflem{lem:solv} (ii).
\end{proof}

\begin{theorem}
\label{Arith : Finiteness of Reps} 
Let\/ $\bk$  be an algebraically closed field 
with characteristic $0$, 
$X \in \SchZ$ connected normal, 
 $d$ a positive integer and 
 $\m$ a modulus on  $X$.

\sn
$(\mathrm{i})$
  If  $X$  is flat, then
  there exist only finitely many isomorphism classes
  of semisimple continuous representations
  $\rho: \fgXm \ra \GL_d(\bk)$. 

\sn
$(\mathrm{ii})$
 If $X$  is vertical over a finite field $k$
 we assume that
 there exist an \'etale open $X' \ra X$ and
 a proper generically smooth morphism $X' \ra Z$
 to a regular curve $Z$ over $k$.
 Then there exist only finitely many isomorphism classes
 of semisimple continuous geometric representations
 $\rho: \fgXm \ra \GL_d(\bk)$.
\end{theorem}
\begin{proof}
 First note that, in the case of
 \refthm{Arith : Finiteness of Reps} (ii),
 by replacing the finite field $k$
 with a finite extension,
 the set of isomorphism classes of
 continuous semisimple geometric representations
 of $\fgXm$ into $\GL_d(\bk)$
 is equal to
 the set of isomorphism classes of
 continuous semisimple representations
 of $\fgXmg$ into $\GL_d(\bk)$.
 It is sufficient
 for the proof of $(\mathrm{i})$
 (resp.\ $(\mathrm{ii})$)
 to show that
 there are only finitely many possibilities
 of open normal subgroups of $\fgXm$
 (resp. $\fgXmg$)
 which appear as the kernels of
 semisimple continuous
 representations
 $\rho : \fgXm \ra \GL_d(\bk)$
 (resp.\ $\rho : \fgXmg \ra \GL_d(\bk)$)
 since there exist only finitely many
 isomorphism classes of
 irreducible representations of a finite group over $\bk$
 (\Cf \cite{MethodsI}, $(21.25)$).
 Let $\rho : \fgXm \ra \GL_d(\bk)$
 (resp. $\rho : \fgXmg \ra \GL_d(\bk)$)
 be
 a continuous semisimple representation.
 By Jordan's theorem
 (\Cf \cite{Suprunenko}, Chap.\ VI, Sect.\ $24$, Th.\ $3$)
 there exists an open normal subgroup $N$
 of $\fgXm$ (resp. $\fgXmg$)
 containing the kernel $\Ker(\rho)$ of $\rho$ 
 such that $N/\Ker(\rho)$ is abelian
 and that $(\fgXm:N) \le J(d)$
 (resp. $(\fgXmg:N) \le J(d)$),
 where $J(d)$ is a positive integer depending only on $d$.
 Since $\fgXm$ (resp. $\fgXmg$) is small,
 there exist only finitely many such $N$.
 Let $f : X' \ra X$ be the Galois covering
 corresponding to $N$.
 Then 
 there exists
 a surjective homomorphism $\fg(X',\m') \ra N$
 (resp. $\fg(X',\m')^{\geo} \ra N$)  
 for the modulus  $\m':= f^{\ast}\m$  on  $X'$  
 and it induces a surjection
 $\fg(X',\m')^{\ab} \ra N^{\ab}$
 (resp. $\fg(X',\m')^{\ab,\geo} \ra N^{\ab}$).
 Thus $N^{\ab}$ is finite by Lemma \ref{lem:solv}.
 Since $N/\Ker(\rho)$ is abelian,
 the commutator subgroup of $N$
 is contained in $\Ker(\rho)$.
 This proves the assertion.
\end{proof}


\providecommand{\bysame}{\leavevmode\hbox to3em{\hrulefill}\thinspace}
\providecommand{\href}[2]{#2}


\vspace{0.5cm}

\noindent
 Shinya Harada \\
 Graduate School of Mathematics \\
 Kyushu University \\
 6-10-1, Hakozaki, Higashiku, Fukuoka-city, 812-8581
 Japan \\
 JSPS Research Fellow, \\
{\tt s.harada@math.kyushu-u.ac.jp}

\vspace{0.5cm}

\noindent
 Toshiro Hiranouchi \\
 Research Institute for Mathematical Sciences,\\ 
 Kyoto University, \\
 Kyoto 606-8502 Japan,\\ 
{\tt hira@kurims.kyoto-u.ac.jp }

\end{document}